\newcommand{\II}{\mathbb I}
\newcommand{\e}{\varepsilon}
\newcommand{\IR}{\mathbb R}
\newcommand{\nw}{\mathrm{nw}}
\newtheorem{theorem}{Theorem}
\newtheorem{problem}{Problem}
\newtheorem{corollary}{Corollary}
\newtheorem{example}{Example}
\newtheorem{claim}{Claim}
\title{On locally extremal functions on connected spaces}
\author{T.~Banakh, M.~Vovk, M.~W\'ojcik}
\address{T.Banakh: Ivan Franko Lviv National University, Lviv, Ukraine, and\newline
Unwersytet Humanistyczno-Przyrodniczy im. Jana Kochanowskiego, Kielce, Poland}
\email{tbanakh@yahoo.com}
\address{M.Vovk: National University ``Lvivska Politechnika", Lviv, Ukraine}
\address{M.R.W\'ojcik: Instytut Matematyki i Informatyki, Politechnika Wroclawska, Wroclaw, Poland}
\email{michal.r.wojcik@pwr.wroc.pl}
\subjclass{54D05; 54C30}
\begin{document}

\begin{abstract} We construct an example of a continuous non-constant function $f:X\to \IR$ defined on a connected complete metric space $X$ such that every point $x\in X$ is a point of local minimum or local maximum for $f$. Also we show that any such a space $X$ should have the network weight and the weak separate number $\nw(X)\ge R(X)\ge\mathfrak c$.
\end{abstract}
\maketitle

This note is motivated by the following problem of M.W\'ojcik \cite{Woj1}, \cite{MW} (see also \cite[Question 7]{BGN}) on the nature of locally extremal functions on connected spaces. We define a function $f:X\to \IR$ on a topological space $X$ to be {\em locally extremal} if each point $x\in X$ is a point of local minimum or local maximum for $f$.

\begin{problem} Is a each locally extremal continuous function $f:X\to \IR$ on a connected metric space $X$ constant?
\end{problem}

In \cite{BGN} this problem has been answered in affirmative under the condition that the density of the connected metric space $X$ is strictly smaller than $\mathfrak c$, the size of continuum. In fact, this result is a particular case a more general theorem treating locally extremal functions on connected topological spaces $X$ with the weak separate number $R(X)<\mathfrak c$. 

Following M.Tkachenko \cite{Tk}, we define a topological space $X$ to be {\em weakly separated} if to each point $x\in X$ has an open neighborhood $O_x\subset X$ such that for any two distinct points $x,y\in X$ either $x\notin O_y$ or $y\notin O_x$. The cardinal
$$R(X)=\sup\{|Y|:\mbox{$Y$ is a weakly separated subspace of $X$}\}$$
is called {\em the weak separate number} of $X$. By \cite{Tk}, $R(X)\le nw(X)\le w(X)$, where $w(X)$ (resp. $nw(X)$) stands for the (network) weight of $X$. On the other hand, A.~Hajnal and I.~Juhasz \cite{HJ} constructed a CH-example of a regular space $X$ with $\aleph_0=R(X)<nw(X)=\mathfrak c$. It is an open problem if such an example exists in ZFC, see Problem 15 in \cite{GM}.

\begin{theorem}\label{t1} If $X$ is a topological space and $f:X\to \IR$ is a locally extremal function, then $|f(X)|\le R(X)$. 
\end{theorem}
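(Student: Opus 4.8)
The plan is to bound $|f(X)|$ by producing, for (almost) every real value taken by $f$, a single point realizing it, and then showing that a carefully chosen collection of such points is a weakly separated subspace of $X$; since $R(X)$ dominates the cardinality of every weakly separated subspace, this gives the desired inequality. The first thing to notice is that one cannot simply pick an arbitrary representative of each value. If $x$ is a point of local minimum with $f(x)$ small and $y$ is a point of local maximum with $f(y)$ large, then the two extremal neighbourhoods witnessing these facts may each contain the other point, so no separation is achieved. The remedy, which I expect to be the crux of the argument, is to treat points of local maximum and points of local minimum \emph{separately}.

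Let $M$ denote the set of points of local maximum of $f$ and $m$ the set of points of local minimum; since $f$ is locally extremal, $X=M\cup m$, and hence $f(X)=f(M)\cup f(m)$. I would first bound $|f(M)|$. For each value $t\in f(M)$ fix a point $x_t\in M$ with $f(x_t)=t$, and, using that $x_t$ is a point of local maximum, fix an open set $O_t\ni x_t$ with $f(y)\le t$ for all $y\in O_t$. Put $Y=\{x_t:t\in f(M)\}$ and assign to each $x_t$ the neighbourhood $O_t$. I claim $Y$ is weakly separated. Indeed, take distinct $x_s,x_t\in Y$ with, say, $s<t$; were $x_t$ to belong to $O_s$, we would get $t=f(x_t)\le s$, a contradiction, so $x_t\notin O_s$, which is exactly the required alternative for this pair. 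As this holds for every pair, $Y$ is weakly separated, and therefore $|f(M)|=|Y|\le R(X)$. The symmetric argument, reversing all inequalities, produces from points of local minimum a weakly separated subspace of size $|f(m)|$, giving $|f(m)|\le R(X)$ as well.

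It remains to combine the two bounds. Since $f(X)=f(M)\cup f(m)$, we have $|f(X)|\le |f(M)|+|f(m)|$. When $f(X)$ is infinite, at least one of $f(M)$, $f(m)$ is infinite, so $R(X)$ is infinite and dominates it; then the right-hand side equals $\max(|f(M)|,|f(m)|)\le R(X)$, and we conclude $|f(X)|\le R(X)$. This is the essential case, and the only one relevant to Problem~1, where $f$ is continuous on a connected space and $f(X)$ is therefore a (possibly degenerate) interval, hence of size $1$ or $\mathfrak c$; the finitely-valued case is elementary and can be disposed of by directly selecting finitely many points with distinct values that admit separating neighbourhoods.

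The conceptual heart of the proof, and the step I expect to require the most care, is precisely the decomposition into the two types of extremum. Within a single type the defining inequality is \emph{monotone} along the order of $\IR$: for two points of local maximum the smaller value never ``reaches up'' into the larger one's witnessing neighbourhood, which is exactly what forces distinct values to be weakly separated. Mixing the two types destroys this monotonicity and breaks the separation, so the split is not a mere convenience but the mechanism that makes the estimate work.
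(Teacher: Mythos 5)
Your treatment of the infinite case is correct and is essentially the paper's own argument: the same decomposition of $X$ into points of local maximum and points of local minimum, the same witnessing neighbourhoods, and the same observation that within a single type the order on the values forces weak separation. (You build the weakly separated set directly from one representative per value, whereas the paper argues by contradiction from a subset on which $f$ is injective; these are the same proof.)

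The gap is in the finite case, which you wave off as ``elementary'' and propose to settle ``by directly selecting finitely many points with distinct values that admit separating neighbourhoods.'' This is precisely where the mixing problem you yourself identify as the crux reappears. If $R(X)=n$ is finite, your two one-type bounds give only $|f(X)|\le|f(M)|+|f(m)|\le 2n$, and to improve $2n$ to $n$ you must weakly separate a set of $n+1$ points that in general contains both local maxima and local minima --- which local extremality alone cannot do. In fact, without an extra hypothesis the finite case is simply false: on a two-point indiscrete space $X=\{a,b\}$, the function with $f(a)=0$, $f(b)=1$ is locally extremal (each point is a global extremum), yet $|f(X)|=2$ while $R(X)=1$. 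The paper closes this case by invoking the \emph{continuity} of $f$: if $A$ is a set of $n+1$ points on which a continuous $f$ takes distinct values, then pulling back disjoint intervals around those values shows $A$ is discrete in the subspace topology, hence weakly separated, so $R(X)\ge n+1$, a contradiction. (Strictly speaking continuity is not among the hypotheses of Theorem~\ref{t1} as printed, though it is present in Corollary~\ref{c1} where the theorem is used; your proof, like the paper's, needs it for the finite case.) So your finite case requires both the continuity assumption and an actual argument; the separating neighbourhoods cannot be extracted from extremality alone.
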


\begin{proof} Write $X$ as the union $X=X_0\cup X_1$ of the sets $X_0$ and $X_1$ consisting of local minimums and local maximums of the function $f$, respectively. We claim that $|f(X_0)|\le R(X)$. Assuming the converse, find a subset $A\subset X_0$ such that $|A|>R(X)$ and $f|A$ is injective. Each point $a\in A$, being 
a point of local minimum of $f$, possesses a neighborhood $O_a\subset X$ such that $f(a)\le f(x)$ for all $x\in O_a$. We claim that the family of neighborhoods $\{O_a\}_{a\in A}$ witnesses that the set $A$ is weakly separated. Assuming the opposite, we would find two distinct points $a,b\in A$ such that $a\in O_b$ and $b\in O_a$. It follows from $b\in O_a$ that $f(a)\le f(b)$ and from $a\in O_b$ that $f(b)\le f(a)$. Consequently, $f(a)=f(b)$, which contradicts the injectivity of $f$ on $A$. This contradiction proved the inequality $|f(X_0)|\le R(X)$. By analogy we can prove that $|f(X_1)|\le R(X)$. 

If $R(X)$ is infinite then the inequality $\max\{|f(X_0)|,|f(X_1)|\}\le R(X)$ implies $|f(X)|\le|f(X_0)|+|f(X_1)|\le 2R(X)=R(X)$. Now assume that $n=R(X)$ is finite. We claim that $|f(X)|\le n$. Assuming the converse, we would find a finite subset $A=\{a_0,\dots,a_n\}\subset X$ such that $|f(A)|=n+1$.
The injectivity and continuity of the map $f|A$ guarantees that the subspace $A$ is discrete and hence weakly separated. Consequently, $R(X)\ge|A|=n+1>R(X)$, which is a contradiction.
\end{proof}

\begin{corollary}\label{c1} Each continuous locally extremal function $f:X\to \IR$ on a connected topological space $X$ with $R(X)<\mathfrak c$ is constant.
\end{corollary}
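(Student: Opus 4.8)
The plan is to derive this as an immediate consequence of Theorem~\ref{t1} together with the order-theoretic structure of connected subsets of the real line. Since $f$ is locally extremal, Theorem~\ref{t1} applies verbatim and yields the cardinality bound $|f(X)|\le R(X)<\mathfrak c$. The entire remaining task is to explain why a connected space cannot map continuously onto a subset of $\IR$ of cardinality strictly between $1$ and $\mathfrak c$; in other words, the hypothesis $R(X)<\mathfrak c$ must be leveraged through a dichotomy that collapses the image to a point.

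First I would observe that $f(X)$ is a connected subset of $\IR$, being the continuous image of the connected space $X$. The key structural fact I would invoke is that connected subsets of $\IR$ are exactly the (possibly degenerate or unbounded) intervals: if $f(X)$ contained two distinct points $a<b$, then by connectedness it would contain the whole interval $[a,b]$. This is the only genuinely topological input beyond Theorem~\ref{t1}, and it is standard.

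Next I would exploit the cardinality dichotomy for intervals. A nondegenerate interval $[a,b]$ with $a<b$ has cardinality exactly $\mathfrak c$, so if $f$ were non-constant we would obtain $|f(X)|\ge|[a,b]|=\mathfrak c$. Combined with the bound $|f(X)|\le R(X)<\mathfrak c$ from Theorem~\ref{t1}, this is a contradiction. Hence $f(X)$ can contain at most one point, which forces $f$ to be constant.

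I do not expect any serious obstacle here: the corollary is a clean specialization of Theorem~\ref{t1}, and the argument is almost entirely a matter of packaging. The one point deserving explicit mention — and the conceptual heart of why the threshold $\mathfrak c$ appears — is the jump in cardinality for real intervals, namely that there is no connected subset of $\IR$ whose size lies strictly between $1$ and $\mathfrak c$. Everything else is a direct substitution of the inequality $|f(X)|\le R(X)<\mathfrak c$ into this dichotomy.
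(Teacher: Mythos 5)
Your argument is correct and is exactly the paper's proof: apply Theorem~\ref{t1} to get $|f(X)|\le R(X)<\mathfrak c$, then use that a connected subset of $\IR$ with more than one point is a nondegenerate interval and hence has cardinality $\mathfrak c$. You merely spell out the cardinality dichotomy for intervals that the paper leaves implicit.
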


\begin{proof} By Theorem~\ref{t1}, $f(X)$ is a connected subset of $\IR$ with cardinality $|f(X)|\le R(X)<\mathfrak c$, which is possible only if $f(X)$ is a singleton.
\end{proof}

The condition $R(X)<\mathfrak c$ is essential in this corollary as the following example from \cite{MW} and \cite{BGN} shows.

\begin{example} The projection $f:[0,1]^2\to[0,1]$ of the lexicographic square onto the interval is locally extremal but not constant.
\end{example}

In \cite{DF} A.~Le Donne and A.~Fedeli announced the existence of a non-constant locally extremal continuous function defined on a connected metric space.
In the following theorem we describe a completely-metrizable example with the same properties.

\begin{theorem}\label{t2} There is a connected complete metric space and a continuous function $f:X\to\IR$ which is locally extremal but not constant.
\end{theorem}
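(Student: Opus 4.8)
The plan is to construct a connected complete metric space $X$ together with a continuous locally extremal non-constant function $f:X\to\IR$. By Corollary~\ref{c1} such a space must satisfy $R(X)\ge\mathfrak c$, so the network weight of $X$ is at least continuum; this is a hard constraint that rules out all separable examples and tells me the construction cannot be too economical. The guiding intuition comes from the lexicographic square example: there the projection is locally extremal because every point sits in a tiny vertical segment on which it is an endpoint, hence a local extremum. I want a metric analogue of this phenomenon, so I expect $X$ to be built as a kind of ``thickened graph'' or inverse/direct limit in which the extremal structure of an ordered set is preserved while metrizability and connectedness are engineered by hand.

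The key obstacle to overcome is simultaneously achieving three properties that pull against one another: connectedness (which forbids nontrivial clopen pieces and is easy to lose under delicate constructions), completeness of the metric, and local extremality of a non-constant $f$. My plan is to build $X$ by transfinite recursion of length $\mathfrak c$, at each stage attaching new arcs or points so as to (i) kill any potential separation of the space into two relatively clopen sets, while (ii) keeping the value $f(x)$ of each newly added point an \emph{isolated} extremum relative to its neighborhood. The function $f$ should be defined coherently throughout the recursion so that each point acquires a neighborhood on which it is either a minimum or a maximum; the natural device is to let the ``height'' $f$ vary along the attached arcs so that junction points become local extrema of one type and interior points of the arcs become extrema of the other type.

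First I would fix the target: a range such as a Cantor-like or order-dense subset of an interval, so that $|f(X)|=\mathfrak c$ is consistent with Theorem~\ref{t1}. Next I would set up the skeleton, most plausibly as a direct limit of finite or countable trees of arcs, where each arc carries a monotone piece of $f$ and is glued at its endpoints. Then I would verify, level by level, that the gluing preserves both the metric-completeness (by controlling diameters, e.g. halving arc lengths at each stage so Cauchy sequences converge) and the local extremal condition (each vertex is a local min or max because all incident arcs ascend, or all descend, away from it). The connectedness is then automatic if the skeleton is a tree and the limit is taken as a completion. Finally, I would confirm non-constancy, which is immediate once two arcs carry distinct $f$-values, and check that $f$ is genuinely continuous on the completed space and that no point fails the extremal condition in the limit.

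The step I expect to be genuinely hard is the simultaneous control of completeness and local extremality \emph{at limit stages} of the recursion: a point that is a local minimum at every finite stage may, after passing to the metric completion, acquire in its limit neighborhood points of strictly smaller value contributed by arcs attached at later stages, thereby destroying the extremality. Managing this will require a careful bookkeeping scheme -- presumably ensuring that arcs attached near an existing point $x$ only ever take $f$-values on the correct side of $f(x)$, and that their diameters shrink fast enough that they do not ``leak'' into a fixed neighborhood of $x$. Getting these two requirements to coexist, rather than conflict, is the crux of the construction.
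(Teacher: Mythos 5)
Your proposal is a plan rather than a proof, and the step you yourself flag as ``genuinely hard'' is never resolved; but the problem is worse than incomplete bookkeeping --- the skeleton you propose cannot work even in principle. If an arc carries a strictly monotone piece of $f$, then every interior point of that arc has, in each of its neighborhoods, points with $f$-values both above and below its own, so it is neither a local minimum nor a local maximum. More globally: if $\gamma:[0,1]\to X$ is any path and $f$ is locally extremal on $X$, then $f\circ\gamma$ is a continuous locally extremal function on $[0,1]$, hence constant by Corollary~\ref{c1} (since $R([0,1])=\aleph_0<\mathfrak c$). So $f$ must be constant on every path component --- indeed on every separable connected subspace --- and a space obtained as the completion of a path-connected union of arcs glued at endpoints has a dense set on which $f$ is forced to be constant, whence $f$ is constant everywhere. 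Your guiding picture (``height varies along attached arcs'') is therefore exactly the thing the construction must avoid; any correct example must fail to be separably connected, which is why the paper derives Theorem~3 as a corollary of Theorem~\ref{t2}.

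The paper's construction sidesteps this by never letting $f$ vary along a path. The space is a disjoint union of ``towns'' $T_\alpha$ ($\alpha\in[0,1]$), each a connected complete graph on two copies of $[0,1]$ inside $\ell_1$, with $f\equiv\alpha$ on $T_\alpha$; so $f$ is locally constant at every point except the two distinguished ``airports'' $\alpha^{\downarrow}_\alpha,\alpha^{\uparrow}_\alpha$ of each town. Different levels are linked only by assigning distance $|\beta-\alpha|$ between certain pairs of airports in $T_\alpha$ and $T_\beta$ --- these ``air connections'' are edges in the metric only, contributing no new points, so no path between distinct towns exists. The airport $\alpha^{\downarrow}_\alpha$ sees only towns $T_\gamma$ with $\gamma\le\alpha$ in its unit ball (a local maximum), and dually for $\alpha^{\uparrow}_\alpha$; completeness is immediate from the explicit metric; and connectedness is proved not via paths but by showing that for any clopen $U$ the image $f(U)$ is open in $[0,1]$, because every $\e$-ball around $\alpha^{\downarrow}_\alpha$ (resp.\ $\alpha^{\uparrow}_\alpha$) meets all towns $T_\gamma$ with $\gamma\in(\alpha-\e,\alpha]$ (resp.\ $[\alpha,\alpha+\e)$). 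If you want to salvage your approach, you would have to replace ``arcs carrying monotone pieces of $f$'' by some mechanism of this kind, in which the range of $f$ is swept out by accumulation of fibers rather than by continuous variation along connected one-dimensional pieces.
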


\begin{proof} Let $\II^{\uparrow}=\{x^{\uparrow}:x\in\II\}$ and $\II^{\downarrow}=\{x^{\downarrow}:x\in\II\}$ be two disjoint copies of the unit interval $\II=[0,1]$. A basic building block of our metric space $X$ is the full graph 
$$T=\bigcup_{x,y\in \II^{\downarrow}\cup\II^{\uparrow}}[x,y]\subset l_1(\II^{\downarrow}\cup\II^{\uparrow})$$
with the set of vertices $\II^{\downarrow}\cup\II^{\uparrow}$ in the Banach space $$l_1(\II^{\downarrow}\cup\II^{\uparrow})=\{g:\II^{\downarrow}\cup\II^{\uparrow}\to\IR:\sum_{x\in \II^{\downarrow}\cup\II^{\uparrow}}|f(x)|<\infty\}.$$
For every $\alpha\in[0,1]$ the copy $T_\alpha=T\times\{\alpha\}$ of $T$ will be called the $\alpha$th town. The space $T_\alpha$ has diameter 2 in the metric $d_\alpha$  induced from $l_1$-metric of the Banach space $l_1(\II^{\downarrow}\cup\II^{\uparrow})$. It will be convenient to think of the distance in the town $T_\alpha$ as the smallest amount of time for getting from one place to another place of $T_\alpha$ by a taxi that moves with velocity 1.

The vertices  
$$
\begin{aligned}
&x_\alpha^{\downarrow}=(x^{\downarrow},\alpha)\in\II^{\downarrow}\times\{\alpha\}\subset T_\alpha,\\
&x_\alpha^{\uparrow}=(x^{\uparrow},\alpha)\in\II^{\uparrow}\times\{\alpha\}\subset T_\alpha
\end{aligned}
$$  
of the graph $T_\alpha$ are called lower and upper airports, respectively.
For any indices $\gamma<\alpha<\beta$ in $\II$, between the airports $\alpha^{\uparrow}_\alpha\in T_\alpha$ and 
$\alpha^{\downarrow}_\beta\in T_\beta$ there is an air connection taking $\beta-\alpha$ units of time.
Similarly, between the airports $\alpha^{\downarrow}_\alpha$ and $\alpha_\gamma^{\uparrow}$ there is an air connection taking $\alpha-\gamma$ units of time.

Now define a metric $d$ on the space $X=\bigcup_{\alpha\in\II}T_\alpha$ as the smallest amount of time necessary to get from one place to another place of $X$ using taxi (inside of the towns) and planes (between the towns).

More formally, this metric $d$ on $X$ can be defined as follows. In the square $X\times X$ consider the subset $$D=\bigcup_{\alpha\in\II}T_\alpha\times T_\alpha\cup 
\bigcup_{\alpha<\beta}\{(\alpha_\alpha^+,\alpha_\beta^-),(\alpha_\beta^-,\alpha_\alpha^+),(\beta_\beta^-,\beta_\alpha^+),(\beta_\alpha^+,\beta_\beta^-)\}$$
and define a function $\rho:D\to\IR$ letting
$$\rho(x,y)=
\begin{cases}
d_\alpha(x,y)&\mbox{if $x,y\in T_\alpha$, $\alpha\in \II$;}\\
|\beta-\alpha|&\mbox{if $\{x,y\}\in \big\{\{\alpha_\alpha^+,\alpha_\beta^-\},\{\beta_\beta^-,\beta_\alpha^+\}\big\}$ for some $\alpha<\beta$ in $\II$.}
\end{cases}
$$
This function induces a metric $d$ on $X$ defined by
$$d(x,y)=\inf\Big\{\sum_{i=1}^n\rho(x_{i-1},x_i):\forall i\le n\;(x_{i-1},x_i)\in D,\; x_0=x,\; x_n=y\Big\}.$$
It is easy to check that $d$ is a complete metric on $X$.

Next, define a map $f:X\to\II\subset\IR$ letting $f^{-1}(\alpha)=T_\alpha$ for $\alpha\in\II$. It is easy to see that this map is non-constant and non-expanding and hence continuous.

\begin{claim} Each point $x\in X$ is a point of local minimum or local maximum of the map $f:X\to\II$.
\end{claim}

\begin{proof} Take any point $x\in T_\alpha\subset X$, $\alpha\in\II$. If $x\notin\II^{\downarrow}\cup\II^{\uparrow}$ is not an airport, then we can find $\e>0$ such that the open $\e$-ball $O_\e(x)$ of $x$ in $T_\alpha$ does not intersects the set $\II_\alpha^{\downarrow}\cup\II_\alpha^{\uparrow}$. In this case $O_\e(x)$ is open in $X$ and hence $f$ is locally constant at $x$. 

Now assume that $x\in\II^{\downarrow}_\alpha\cup\II^{\uparrow}_\alpha$ and hence $x=\beta^{\downarrow}_\alpha$ or $\beta^{\uparrow}_\alpha$ for some $\beta\in\II$. First consider the case when $x=\beta^{\downarrow}_\alpha$ coincides with a lower airport. If $\beta=\alpha$, then we can consider the 1-neighborhood ball $O_1(x)$ around the airport $x=\alpha^{\downarrow}_\alpha$ and observe that this neighborhood does not intersect towns $T_\gamma$ with $\gamma>\alpha$. Consequently, $f(O_1(x))\subset(-\infty,\alpha]=(-\infty,f(x)]$, which means that $x$ is a point of local maximum of $f$.
If $\beta\ne\alpha$, then we consider the open ball $O_\e(x)\subset X$ of radius $\e=\beta-\alpha$ centered at $x=\beta^{\downarrow}_\alpha$ and observe that it contains only points of the town $T_\alpha$, which means that $f$ is locally constant at $x$.

The case $x=\II^{\uparrow}_\alpha$ can be considered by analogy.
\end{proof}

\begin{claim} The metric space $X$ is connected.
\end{claim}

\begin{proof}
Given a  non-empty open-and-closed subset $U\subset X$, we should prove that $U=X$. First we show that the image $f(U)$ is open. Given any point $\alpha\in f(U)$, fix any $x\in U\cap T_\alpha$. It follows from the connectedness of the town  $T_\alpha$ that $T_\alpha\subset U$. Consider the airports $\alpha_\alpha^{\downarrow}$, $\alpha^{\uparrow}_\alpha$ and find $\e>0$ such that the open set $U$ contains the open $\e$-balls $O_\e(\alpha_\alpha^{\downarrow})$ and $O_\e(\alpha_\alpha^{\uparrow})$ centered at those airports. It follows from the definition of the metric $d$ on $X$ that $$f(O_\e(\alpha_\alpha^{\downarrow}))\supset (\alpha-\e,\alpha]\cap\II\mbox{ and } f(O_\e(\alpha_\alpha^{\uparrow}))\supset [\alpha,\alpha+\e)\cap\II.$$ Unifying those inclusions, we get $f(U)\supset(\alpha-\e,\alpha+\e)\cap\II$, witnessing that $f(U)$ is open in $\II$. By the same reason $(X\setminus U)$ is open in $\II$. The connectedness of the fibers $T_\alpha$ of $f$ implies that the sets $f(U)$ and $f(X\setminus U)$ are disjoint. Now the connectedness of the interval $\II=f(U)\cup f(X\setminus U)$ guarantees that $f(X\setminus U)$ is empty and hence $U=X$.
\end{proof}
\end{proof}

The space $X$ from Theorem~\ref{t2} has an interesting property: it is connected but not separably connected (and hence not path connected). Following \cite{CHI} or \cite{BEHV}, we define a topological space $X$ to be {\em separably connected} if any two points $x,y\in X$ lie is a connected separable subspace of $X$. The problem on the existence of connected metric spaces that are not separably connected was posed in \cite{BEHV} and was answered in \cite{AM} and \cite{WPhD}. However all known examples of such spaces are nor complete.

\begin{theorem} There is a complete metric space which is connected but not separably connected.
\end{theorem}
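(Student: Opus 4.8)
The plan is to show that the space $X$ constructed in the proof of Theorem~\ref{t2} already witnesses this statement: there we verified that $X$ is a connected complete metric space, so the only thing left to check is that $X$ is \emph{not} separably connected. For this it suffices to exhibit a single pair of points of $X$ that lies in no connected separable subspace of $X$.

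The key observation I would isolate first is that local extremality is inherited by subspaces. Indeed, if $Y\subset X$ and $x\in Y$ is a point of local minimum (maximum) of $f$ on $X$, witnessed by a neighbourhood $O_x$ on which $f(x)\le f$ (resp. $f(x)\ge f$), then $O_x\cap Y$ witnesses that $x$ is a point of local minimum (maximum) of $f|Y$. Hence for every subspace $Y\subset X$ the restriction $f|Y:Y\to\IR$ is again locally extremal, and Theorem~\ref{t1} applies to it.

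Next I would fix two points $a\in T_0$ and $b\in T_1$, so that $f(a)=0\ne 1=f(b)$, and let $S\subset X$ be an arbitrary connected subspace containing $a$ and $b$. Since $f$ maps into $\II=[0,1]$, the image $f(S)$ is a connected subset of $\II$ containing the distinct points $0$ and $1$, forcing $f(S)=\II$ and hence $|f(S)|=\mathfrak c$. On the other hand, by the previous paragraph $f|S$ is locally extremal on $S$, so Theorem~\ref{t1} applied to $S$ gives $\mathfrak c=|f(S)|\le R(S)\le\nw(S)$, where the inequality $R(S)\le\nw(S)$ is the one from \cite{Tk} quoted above. Thus $\nw(S)\ge\mathfrak c>\aleph_0$, so $S$ cannot be separable, because a separable metric space is second countable and therefore has countable network weight.

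Consequently no connected separable subspace of $X$ contains both $a$ and $b$, which is exactly the failure of separable connectedness; combined with connectedness and completeness of $X$ this finishes the proof. The only genuinely conceptual point is the heredity of local extremality recorded above; once that is in hand, the statement reduces to the same cardinality clash between the size $\mathfrak c$ of the interval $f(S)=\II$ and the countable network weight imposed by separability that already drives Theorem~\ref{t1} and Corollary~\ref{c1}, so I do not anticipate any real obstacle beyond being careful to evaluate $R$ and $\nw$ on the subspace $S$ rather than on $X$ itself.
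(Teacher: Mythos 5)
Your proposal is correct and is essentially the paper's own argument: the paper simply cites Corollary~\ref{c1} applied to a putative connected separable subspace $S$ joining two points with distinct $f$-values, which is exactly the cardinality clash $\mathfrak c=|f(S)|\le R(S)\le\nw(S)\le\aleph_0$ that you spell out via Theorem~\ref{t1}. You merely make explicit two steps the paper leaves implicit --- the heredity of local extremality under passage to subspaces and the countability of the network weight of a separable metric space --- both of which are handled correctly.
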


\begin{proof} The connected complete metric space $X$ from Theorem~\ref{t2} is not separably connected by Corollary~\ref{c1}.
\end{proof}

The connected metric space $X$ from Theorem~\ref{t2} is not separably connected but contains many non-degenerate separable connected subspaces.

\begin{problem} Is there a connected  complete metric space $C$ such that each connected separable subspace of $C$ is a singleton.
\end{problem}

Non-complete connected metric spaces with this property were constructed in Theorem 28 of \cite{WPhD}.
\newpage

\end{document}